\documentclass[11pt,reqno]{amsart}
\usepackage[T1]{fontenc}
\usepackage{lmodern,amsmath,amssymb,amsthm,mathtools,mathrsfs}
\usepackage[a4paper,margin=27mm]{geometry}
\usepackage{microtype,enumitem}
\usepackage[hidelinks,pdfusetitle]{hyperref}
\hypersetup{
 pdftitle={A new proof of no-wandering domain theorems without quasiconformal techniques},
 pdfsubject={Hyperbolic area proofs of no-wandering theorems without quasiconformal techniques},
 pdfkeywords={wandering domain, hyperbolic area, singular value, Poincare metric, rational map, entire function}
}
\setlist[enumerate]{leftmargin=2em,itemsep=2pt}
\newcommand{\C}{\mathbb C}
\newcommand{\D}{\mathbb D}
\newcommand{\R}{\mathbb R}

\newcommand{\sphere}{\widehat{\mathbb C}}
\newcommand{\Fat}{\mathcal F}
\newcommand{\Jul}{\mathcal J}
\newcommand{\dd}{\,\mathrm d}
\newtheorem{theorem}{Theorem}[section]
\newtheorem{proposition}[theorem]{Proposition}
\newtheorem{lemma}[theorem]{Lemma}
\newtheorem{corollary}[theorem]{Corollary}
\theoremstyle{definition}
\newtheorem{definition}[theorem]{Definition}
\newtheorem{example}[theorem]{Example}
\numberwithin{equation}{section}
\title[A new proof of no-wandering domain theorems without quasiconformal techniques]{A new proof of no-wandering domain theorems\\
without quasiconformal techniques}

\author{Zihao Ye}   
\address{Zihao Ye: School of Mathematical Sciences, East China Normal University, Shanghai 200241, China}  
\email{52290155037@stu.ecnu.edu.cn}    

\date{September 20, 2026}
\subjclass[2020]{Primary 37F10; Secondary 30D05, 30F45}
\keywords{wandering domain, hyperbolic area, singular value, Poincar\'e metric, rational map, entire function}
\begin{document}
\begin{abstract}
In one complex dimension, we use hyperbolic area to prove the absence
of wandering Fatou components for rational maps of degree at least two
and for transcendental entire functions whose singular sets are compact
and have all their accumulation points in the Fatou set. As corollaries, we recover
Sullivan's theorem for rational maps and the theorem of
Eremenko--Lyubich and Goldberg--Keen for entire functions with finitely
many singular values. The proof uses neither quasiconformal deformation
nor Teichm\"uller theory.
\end{abstract}
\maketitle

\section{Introduction}

In one complex dimension, Sullivan proved that rational maps of degree
at least two on the Riemann sphere have no wandering Fatou components
\cite{Sullivan}. Eremenko--Lyubich \cite[Theorem~3]{EL} and,
independently, Goldberg--Keen \cite[Theorem~4.2]{GoldbergKeen} proved
the corresponding result for entire functions on $\C$ with finitely many
singular values.
These proofs use quasiconformal techniques.

In this paper, we develop a hyperbolic-area method that proves the
absence of wandering Fatou components for rational maps of degree at
least two and for transcendental entire functions whose singular sets
are compact and have all their accumulation points in the Fatou set.
The classical theorems above follow as corollaries, with proofs that
use no quasiconformal techniques. This gives a partial affirmative
answer to Bergweiler's Question~9 \cite[Section~4.6]{Bergweiler},
covering the rational and finite-type entire cases.

Write $\Fat(f)$ and $\Jul(f)$ for the Fatou and Julia sets. A Fatou
component is \emph{wandering} if its forward images lie in pairwise
distinct Fatou components.

\begin{definition}\label{def:singular}
For a transcendental entire function $f:\C\to\C$, let $S(f)$ be the closure in
$\C$ of its finite critical and asymptotic values.
Write $\mathcal B$ for the class with
compact $S(f)$ and $\mathcal S$ for the subclass with finite $S(f)$.
For a rational map $f:\sphere\to\sphere$, let $S(f)$ be its critical values.
In either case, $S(f)'$ denotes the set of accumulation points of $S(f)$
in the corresponding target space.
\end{definition}

\begin{theorem}[Main theorem]\label{core:singular-accumulation}
Let $f:\sphere\to\sphere$ be a rational map of degree at least two,
or let $f:\C\to\C$ be a transcendental entire function whose singular
set $S(f)$ is compact and satisfies $S(f)'\subset\Fat(f)$.
Then $f$ has no wandering Fatou components.
\end{theorem}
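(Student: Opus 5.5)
We argue by contradiction. Suppose $U$ is a wandering Fatou component and write $U_n$ for the Fatou component containing $f^n(U)$. The plan is to find a finite hyperbolic area that the forward orbit of $U$ would have to exhaust, and then show that the wandering hypothesis forces infinitely many disjoint pieces of definite area inside it.

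\textbf{Step 1: reduction to an orbit avoiding singular values.} Only finitely many $U_n$ can meet $S(f)$.
\begin{itemize}[label={}]
\item For rational $f$ this holds because $S(f)$ is finite and the $U_n$ are pairwise disjoint.
\item In the entire case, $S(f)'$ is compact and lies in $\Fat(f)$. It is therefore covered by finitely many Fatou components, and none of these is some $U_n$ for large $n$. Indeed, if a component $V\supset$ a point of $S(f)'$ were a $U_n$, then accumulation of singular values would persist nearby. I would handle this by noting that only finitely many Fatou components can contain points of $S(f)'$. Every other component meeting $S(f)$ meets it in points of a set whose accumulation lies in those finitely many components, so a neighbourhood $N$ of $S(f)'$ inside $\Fat(f)$ leaves $S(f)\setminus N$ finite.
\end{itemize}
Replacing $U$ by $U_{n_0}$, we may assume that no $U_n$ meets $S(f)$. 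We also assume that no $U_n$ meets the finitely many components containing $S(f)'$; if some $U_n$ did, that component would be a $U_n$, and this can happen for at most finitely many $n$.

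Once this holds, $f:U_n\to U_{n+1}$ is a covering map. Moreover $f^{-1}(U_{n+1})$ consists of components each mapping as a covering onto $U_{n+1}$.

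\textbf{Step 2: the hyperbolic-area setup.} Let $\Omega$ be the complement of the closure of the postsingular set, $P=\overline{\bigcup_{k\ge1}f^k(S(f))}$, possibly enlarged by a compact set in the Fatou set. The aim is a hyperbolic domain $\Omega$ with the following properties:
\begin{enumerate}
\item $f:f^{-1}(\Omega)\to\Omega$ is a covering;
\item $f^{-1}(\Omega)\subset\Omega$;
\item $\bigcup_n U_n\subset\Omega$.
\end{enumerate}
The hypothesis $S(f)'\subset\Fat(f)$ is what should make $P$ small near the Julia set. Off finitely many Fatou components, $P$ is a countable set accumulating only on the forward orbits of $S(f)'$, which stay in the Fatou set. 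I would then compare the hyperbolic metrics $\rho_\Omega$ and $\rho_{f^{-1}(\Omega)}$. By the Schwarz--Pick lemma, $f$ is a local isometry from $\rho_{f^{-1}\Omega}$ to $\rho_\Omega$, and $\rho_{f^{-1}\Omega}\ge\rho_\Omega$. Hence $f$ expands $\rho_\Omega$ weakly, and strictly wherever $f^{-1}(\Omega)\ne\Omega$.

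\textbf{Step 3: the area contradiction.} Pick a small disk $D\Subset U$ and consider the hyperbolic areas $A_n=\operatorname{area}_{\rho_{\Omega}}(f^n(D))$. Since $f^n$ is a covering on the $U_n$, one can transport $D$ injectively, choosing a lift branch where needed. The expansion from Step 2 then gives a lower bound on $A_n$ in terms of $\operatorname{area}_{\rho_{U_n}}$. Pulling back, the sets $f^{-n}(f^n(D))\cap U$ have area bounded in $\rho_\Omega$, while the expansion factors accumulate. The precise plan has three parts.
\begin{enumerate}
\item Work in the $\rho_\Omega$-metric and use the Fatou property: $f^n|_U$ is a normal family, and all limit functions are constant in the Julia set, or lie in the finitely many absorbing components, by the classification of limit functions for wandering domains.
\item For rational maps, since $\Omega$ is $\sphere$ minus $P$ and the spherical area is finite, the $U_n$ must shrink. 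The density $\rho_\Omega$ blows up only near $P$, so the $\rho_\Omega$-area of $U_n$ is comparable to its Euclidean area away from $P$, and this area tends to zero.
\item Meanwhile, $\rho_{U_n}$-areas of $f^n(D)$ are bounded below, because the covering $f^n:U\to U_n$ is a local isometry of the intrinsic metrics. Combining this with $\rho_{U_n}\ge\rho_\Omega$ and a Koebe-type comparison $\rho_{U_n}\asymp 1/\operatorname{dist}(\cdot,\partial U_n)$ gives the contradiction.
\end{enumerate}

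\textbf{The main obstacle.} The difficulty is the comparison in Step 3. One needs a uniform constant showing that $\rho_{U_n}$ and $\rho_\Omega$ are comparable on $f^n(D)$. This needs $f^n(D)$ to stay a definite $\rho_\Omega$-distance from $\partial U_n$, which is essentially Baker's argument: that $f^n$ has bounded hyperbolic derivative on compacta. For the entire case it additionally requires ruling out $U_n\to\infty$, which I would attack via the compactness of $S(f)$ and a logarithmic-change-of-variable expansion estimate in the tract region.
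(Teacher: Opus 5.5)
Your Step~1 reduction is fine and essentially matches the paper's. The gap is Step~3: it contains no working contradiction. The ``main obstacle'' you flag is not a technicality; the comparison you need is false in exactly the configurations that must be excluded.

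Since $U_n\subset\Omega$, domain monotonicity gives $\rho_{U_n}\ge\rho_\Omega$. So a lower bound on the $\rho_{U_n}$-area of $f^n(D)$ and a decaying upper bound on its $\rho_\Omega$-area are compatible, and you would need $\rho_{U_n}\le C\rho_\Omega$ on $f^n(D)$ uniformly in $n$. This fails in both regimes:
\begin{enumerate}
\item If $f^{n_k}(D)\to\zeta\in\Jul(f)\setminus P$, then $\rho_\Omega$ is bounded near $\zeta$. The $\rho_\Omega$-area of $f^{n_k}(D)$ tends to $0$, while its $\rho_{U_{n_k}}$-area stays equal to the $\rho_U$-area of $D$, so no such $C$ exists. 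Granting your properties (1)--(3), this case is already settled by your weak-expansion remark alone, since $f^*\rho_\Omega\ge\rho_\Omega$ makes $A_n$ nondecreasing. It is not where the difficulty lies.
\item If the orbit accumulates only on $P$, for instance when $P\supset\Jul(f)$, Step~3(2) fails. This case is allowed: nothing restricts the orbits of singular values in the Julia set, so your description of $P$ in Step~2 is wrong. Every $U_n$ missing $P$ is then a whole component of $\Omega$, so $\rho_\Omega=\rho_{U_n}$ there. Its $\rho_\Omega$-area is infinite, $A_n$ is constant, and nothing contradicts.
\end{enumerate}
The root problem is that $\sphere\setminus P$ has finite hyperbolic area only when $P$ is finite, so there is no finite total area for the disjoint $U_n$ to exhaust. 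A bound on the hyperbolic derivative of $f^n$ is the Schwarz--Pick direction. It bounds the $\rho_\Omega$-size of $f^n(D)$ from above, not its $\rho_\Omega$-distance to $\partial U_n$ from below.

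Property (3) also fails in general. Singular values in the discarded components $U_0,\dots,U_{n_0-1}$, or in other preimage components of the orbit, have forward orbits meeting every later $U_n$. Also, with $k\ge1$ your $P$ need not contain $S(f)$, so (1) fails as written.

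The missing idea is to get finite area from finitely many punctures and to take the limit in the background rather than along the orbit. The paper uses $X_j=M\setminus P_j$, where $P_j$ are finite forward-invariant unions of repelling cycles, dense in $\Jul(f)$ in the limit. Then $\mu_j(X_j)<\infty$ and $f^{-1}(X_j)\subset X_j$. It deletes only $S(f)$, not its orbit: with $Y_j=X_j\setminus S(f)$ and $\nu_j=\dd\alpha_{Y_j}-\mu_j$, covering invariance on $f^{-1}(Y_j)\subset X_j$ gives $f^*(\mu_j+\nu_j)\ge\mu_j$. Hence $\mu_j(U_n)\le\mu_j(U_{n+1})+\nu_j(U_{n+1})$ along a univalent tail, and telescoping with $\mu_j(W)<\infty$ yields $\mu_j(U_0)\le\nu_j(W\setminus U_0)$.

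The substance is that this loss is bounded uniformly in $j$. The finitely many singular values outside compact holes $K$ around $S(f)'$ cost at most $|E_0|$, by Gauss--Bonnet (Lemma~\ref{core:puncture}). The holes themselves cost at most a constant outside a fixed neighborhood $N$, by the subharmonic cutoff estimate of Lemma~\ref{core:holes}. This is where $S(f)'\subset\Fat(f)$ is really used.

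The contradiction then comes from $j\to\infty$: by kernel convergence, $\mu_j(U_0)\uparrow\alpha_{U_0}(U_0)=\infty$. The telescoping needs $f:U_n\to U_{n+1}$ to be univalent. That comes from simple connectivity of the tail (Baker's lemma for rational maps, Eremenko--Lyubich in class $\mathcal B$), which your plan does not invoke. No separate treatment of escaping orbits is needed, since in the entire case $\infty$ is just one more puncture.
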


For compact $S(f)$, the hypothesis says equivalently that
$S(f)\cap\Fat(f)$ is compact and $S(f)\cap\Jul(f)$ is finite. It
imposes no restriction on the forward orbits of the Julia singular
values. The Fatou components containing $S(f)'$ are not assumed to be
preperiodic. The following application has infinitely many singular
values in a rotation domain.

\begin{example}\label{example:sine}
Let $\lambda=e^{2\pi i\theta}$, where $\theta$ is Diophantine irrational,
and define the entire function $h_\lambda:\C\to\C$ by
\[
 h_\lambda(z)=\lambda\frac{\sin^2z}{z},\qquad h_\lambda(0)=0.
\]
Then $h_\lambda\in\mathcal B\setminus\mathcal S$, its only finite
asymptotic value is zero, and
\begin{equation}\label{eq:sine-singular}
 S(h_\lambda)=\{0\}\cup
 \left\{\frac{4\lambda x}{1+4x^2}:
 x\in\R\setminus\{0\},\ \tan x=2x\right\},
 \qquad S(h_\lambda)'=\{0\}.
\end{equation}
The origin is a Siegel fixed point \cite{Siegel}. Hence $h_\lambda$ has no wandering
Fatou components by Theorem~\ref{core:singular-accumulation}.
\end{example}

The function-theoretic precedent is the classical example
$\sin z/z\in\mathcal B\setminus\mathcal S$
\cite[Section~4.2]{Sixsmith}. The maps in Example~\ref{example:sine}
are neither geometrically finite \cite[Proposition~5.3]{ARS}, postcritically
separated \cite[Lemma~2.6]{PS}, nor topologically hyperbolic
\cite[Definition~1.2]{BFJK}. They do not satisfy the uniformly escaping
singular-set hypothesis of \cite[Theorem~1.2]{MBRG}, and no affine
conjugate satisfies the real entire hypothesis
$g(\R)\cup S(g)\subset\R$ of \cite[Theorem~1.3]{MBRG}.

Finite singular sets have no accumulation points. We therefore recover
the classical conclusions directly.

\begin{corollary}[Sullivan]\label{core:sullivan}
Every Fatou component of a rational map $f:\sphere\to\sphere$ of
degree at least two is preperiodic.
\end{corollary}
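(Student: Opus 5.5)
This corollary is a direct specialization of Theorem~\ref{core:singular-accumulation}, and the plan is simply to check its hypotheses in the rational case and then unpack the definition of ``wandering.'' Let $f:\sphere\to\sphere$ be rational of degree $d\ge 2$. By Definition~\ref{def:singular}, $S(f)$ is the set of critical values of $f$. By Riemann--Hurwitz, $f$ has exactly $2d-2$ critical points counted with multiplicity, so $S(f)$ is finite. A finite set has no accumulation points, so $S(f)'=\emptyset$. In any case, the rational half of Theorem~\ref{core:singular-accumulation} imposes no condition on $S(f)$ at all. The theorem therefore applies and shows that $f$ has no wandering Fatou components.

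It remains to pass from ``not wandering'' to ``preperiodic.'' Let $U$ be a Fatou component and write $U_n$ for the Fatou component containing $f^n(U)$. Complete invariance of $\Fat(f)$ guarantees that $f^n(U)$ lies in a single component, and for rational $f$ in fact $f^n(U)=U_n$. Saying that $U$ is not wandering means exactly that the $U_n$ are not pairwise distinct. So there are indices $m<n$ with $U_m=U_n$, and then $U_{m}$ is periodic of period dividing $n-m$. Hence $U$ is preperiodic.

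There is no real obstacle here: all the substance lies in Theorem~\ref{core:singular-accumulation}. The only points to state carefully are the finiteness of the critical values and the elementary equivalence between non-wandering and preperiodicity, which follows directly from the definition of wandering given just before Definition~\ref{def:singular}.
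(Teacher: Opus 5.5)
Your proposal is correct and is essentially the paper's proof, which just applies Theorem~\ref{core:singular-accumulation} with $S(f)'=\varnothing$. Your extra step, that non-wandering gives $U_m=U_n$ for some $m<n$ and hence $f^{n-m}(U_m)\subset U_m$, only makes explicit the passage from ``no wandering components'' to ``preperiodic'' that the paper leaves implicit.
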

\begin{corollary}[Eremenko--Lyubich; Goldberg--Keen]\label{core:el}
A transcendental entire function $f:\C\to\C$ with finitely many
singular values has no wandering Fatou components.
\end{corollary}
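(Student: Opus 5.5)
This is a direct specialization of Theorem~\ref{core:singular-accumulation}. The plan is to check that a transcendental entire function $f$ with finitely many singular values satisfies the two hypotheses on $S(f)$ imposed there, and then to invoke the theorem. No further dynamical argument is needed.

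First, by Definition~\ref{def:singular}, $S(f)$ is the closure in $\C$ of the finite critical and asymptotic values. If there are only finitely many singular values, this set is already a finite subset of $\C$. A finite subset of $\C$ is closed and bounded, hence compact, so $f\in\mathcal S\subset\mathcal B$. Second, a finite subset of $\C$ has no accumulation points, so $S(f)'=\emptyset$. The inclusion $S(f)'\subset\Fat(f)$ then holds vacuously.

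Both hypotheses of Theorem~\ref{core:singular-accumulation} are therefore satisfied in the transcendental entire case, and the theorem gives that $f$ has no wandering Fatou components. The only step that needs any care is the bookkeeping: the notions ``finitely many singular values'' in the corollary and ``finite $S(f)$'' in Definition~\ref{def:singular} must agree. They do, because taking the closure of a finite set adds nothing. There is no genuine obstacle, since all the analytic content is contained in Theorem~\ref{core:singular-accumulation}.
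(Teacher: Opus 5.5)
Your proposal is correct and matches the paper's proof. The paper likewise applies Theorem~\ref{core:singular-accumulation} after noting that a finite singular set is compact and has $S(f)'=\varnothing$, so the hypothesis $S(f)'\subset\Fat(f)$ holds vacuously.
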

\begin{proof}[Proof of both corollaries]
Apply Theorem~\ref{core:singular-accumulation} with $S(f)'=\varnothing$.
\end{proof}

\subsection*{Proof outline}
Choose increasing finite unions $P_j$ of repelling periodic orbits,
dense in $\Jul(f)$, and put $X_j=M\setminus P_j$, where $M=\sphere$
or $\C$. Each $X_j$ has finite hyperbolic area. Let $\mu_j$ be its
normalized area form and let $\nu_j$ be the area increment obtained
by deleting $S(f)$.

Suppose a wandering component exists. The classical simple-connectivity
reductions give a simply connected tail of its orbit. In the entire
case, all Fatou components are simply connected, so we can enclose
$S(f)'$ in compact holes in finitely many of them; in the rational
case no holes are needed. Domain monotonicity and a cutoff estimate bound the
area cost of these holes uniformly in $j$, outside fixed larger
neighborhoods. The remaining singular values form a finite set, whose
area cost is bounded by its cardinality. We pass to a tail whose
components avoid these neighborhoods and singular values; on their union
$W=\bigsqcup_{n\ge0}U_n$, we have $\nu_j(W)\le B$ for all $j$.

Since each target component avoids $S(f)$, the maps
$f:U_n\to U_{n+1}$ are coverings, hence conformal isomorphisms.
Covering invariance and domain monotonicity give
\[
 \mu_j(W)\le\mu_j(W\setminus U_0)+\nu_j(W\setminus U_0).
\]
Since $\mu_j(W)<\infty$, cancellation yields $\mu_j(U_0)\le B$.
Finally, kernel convergence and monotone convergence recover the
intrinsic area of $U_0$, which is infinite because $U_0$ is simply
connected. This contradicts the uniform bound.

Section~\ref{sec:area} proves recovery of the hyperbolic metric from
finite punctures and bounds the area costs of punctures and compact
holes. Section~\ref{sec:global} uses covering invariance to establish
area cancellation along a wandering orbit and derives a finite-area
criterion. Applying the estimates of Section~\ref{sec:area} to the
singular set verifies this criterion and proves the main theorem.

\section{Hyperbolic area estimates}\label{sec:area}

\begin{definition}\label{def:area}
For a hyperbolic open set $X\subset\sphere$, let $\rho_X$ be the
complete Poincar\'e density of curvature $-4$, defined componentwise,
and let $\alpha_X$ be the normalized area measure
\[
 \dd\alpha_X=\frac2\pi\rho_X^2\,\dd x\dd y,
 \qquad \rho_\D(z)=\frac1{1-|z|^2}.
\]
\end{definition}

For hyperbolic open sets $X,Y\subset\sphere$, Schwarz--Pick and covering
invariance give
\[
 h^*\dd\alpha_Y\le\dd\alpha_X\quad(h:X\to Y\text{ holomorphic}),
 \quad h^*\dd\alpha_Y=\dd\alpha_X\quad(h\text{ a holomorphic covering}).
\]
Densities therefore increase under domain restriction. By Gauss--Bonnet,
a Riemann sphere with $N\ge3$ punctures has normalized area $N-2$; a simply
connected hyperbolic domain has infinite intrinsic area.

\begin{lemma}[Recovery from finite punctures]\label{core:kernel}
Let $D\subset\sphere$ be closed, and let $F_j\subset D$ be increasing
finite sets with $|F_1|\ge3$ and
$\overline{\bigcup_jF_j}=D$. On each component $U$ of
$\sphere\setminus D$, the densities $\rho_{\sphere\setminus F_j}$
increase locally uniformly to $\rho_U$.
\end{lemma}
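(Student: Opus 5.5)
Since $U\subset\sphere\setminus D\subset\sphere\setminus F_{j+1}\subset\sphere\setminus F_j$, domain monotonicity (Schwarz--Pick applied to the inclusions) gives
\[
 \rho_{\sphere\setminus F_1}\le\rho_{\sphere\setminus F_j}\le\rho_{\sphere\setminus F_{j+1}}\le\rho_U \quad\text{on } U .
\]
So the densities increase pointwise to a limit $\rho\le\rho_U$. By Dini's theorem, locally uniform convergence follows once we know the limit equals the continuous function $\rho_U$. It therefore suffices to prove $\rho\ge\rho_U$ pointwise on $U$.

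For the reverse inequality I would use universal coverings and the Carath\'eodory kernel theorem. Fix $z_0\in U$. Let $\pi_j:\D\to\sphere\setminus F_j$ and $\pi:\D\to U$ be the universal coverings with $\pi_j(0)=\pi(0)=z_0$, normalized so that $\pi_j'(0)>0$ and $\pi'(0)>0$ in a chart at $z_0$. Then $\rho_{\sphere\setminus F_j}(z_0)=1/|\pi_j'(0)|$.

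The main step is to show $\pi_j\to\pi$ locally uniformly on $\D$, which is the covering-map version of kernel convergence (Hejhal's theorem). I would argue as follows.
\begin{enumerate}
\item Since all $\pi_j$ omit the three points of $F_1$, Montel's theorem makes $\{\pi_j\}$ normal. The derivatives $|\pi_j'(0)|=1/\rho_{\sphere\setminus F_j}(z_0)$ are decreasing and bounded below by $1/\rho_U(z_0)>0$. Hence every subsequential limit $g$ is a nonconstant holomorphic map $\D\to\sphere$.
\item By Hurwitz, $g$ omits every point of $\bigcup_j F_j$, since each such point is omitted by all $\pi_j$ for $j$ large. By openness, $g(\D)$ misses the interior of the closure of that union. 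Since $\overline{\bigcup_j F_j}=D$ and $g(\D)$ is open, $g(\D)\subset\sphere\setminus D$. As $g(\D)$ is connected and contains $z_0$, we get $g(\D)\subset U$.
\item Schwarz--Pick for $g:\D\to U$ then gives $|g'(0)|\,\rho_U(z_0)\le 1$. That is, $\rho_U(z_0)\le 1/\lim_j|\pi_j'(0)|=\rho(z_0)$.
\end{enumerate}

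Step (iii) already yields $\rho\ge\rho_U$ at $z_0$, and $z_0\in U$ was arbitrary. So we do not actually need to identify $g$ with $\pi$. Combined with the monotone upper bound $\rho\le\rho_U$, the limit $\rho$ equals $\rho_U$. Dini's theorem on compact subsets of $U$ then gives the locally uniform convergence.

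The delicate point is step (ii): confirming that the limit map lands in $U$ rather than merely in $\sphere\setminus\bigcup_jF_j$. Here we must use that an open set missing a dense subset of $D$ misses $D$. This holds because $g(\D)$ is open, so any point of $D$ in $g(\D)$ would have a neighborhood inside $g(\D)$ containing points of $\bigcup_jF_j$. Care is also needed with the chart at $z_0$ if $z_0=\infty$; this is handled by choosing $z_0\ne\infty$ without loss of generality, or by working in a local coordinate.
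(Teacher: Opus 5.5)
Your proof is correct, and it takes a more elementary route than the paper. The paper checks that $U$ is the Carath\'eodory kernel of every subsequence of $(\sphere\setminus F_j)$ and then cites Hejhal's kernel theorem as a black box. That theorem gives convergence of the normalized universal coverings $\pi_j\to\pi$, from which the densities are read off via $|\pi_j'(0)|$; monotonicity and Dini then give local uniform convergence.

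You instead get $\rho\le\rho_U$ for free from the inclusion $U\subset\sphere\setminus F_j$, so only $\rho\ge\rho_U$ needs an argument. For that you run the relevant half of the kernel-theorem argument yourself:
\begin{enumerate}
\item Montel gives a subsequential limit $g$.
\item The bound $|\pi_j'(0)|\ge1/\rho_U(z_0)$ makes $g$ nonconstant.
\item Hurwitz, openness of $g(\D)$ and density of $\bigcup_jF_j$ in $D$ force $g(\D)\subset U$.
\item Schwarz--Pick for $g:\D\to U$ finishes.
\end{enumerate}
You never need to identify $g$ with the universal covering of $U$, because nestedness supplies the opposite inequality. So the lemma becomes self-contained, using only Montel, Hurwitz and Schwarz--Pick. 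The paper's citation buys brevity and the stronger conclusion $\pi_j\to\pi$, which holds for general, non-monotone kernel convergence but is not needed here.

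Two small corrections. In step (ii), ``misses the interior of the closure'' should read ``misses the closure $D$''; your final paragraph proves exactly that. Also, the case $z_0=\infty\in U$ cannot be handled by choosing $z_0\ne\infty$, since you need equality at every point of $U$. Working in a local coordinate, or moving the point by a M\"obius map, which is an isometry for all the metrics involved, settles it, as you indicate.
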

\begin{proof}
For a base point in $U$, the domains $\sphere\setminus F_j$ and all
their subsequences have Carath\'eodory kernel $U$: every neighborhood of
a point of $D$ meets some $F_j$. Hejhal's kernel theorem
\cite[Theorem~1]{Hejhal} gives convergence of the normalized universal
covers, hence of the densities: at the base point the density is the
reciprocal of the absolute value of the covering map's derivative.
Monotonicity and Dini's theorem give local uniform convergence.
\end{proof}

\begin{lemma}[Area cost of punctures]\label{core:puncture}
If $X\subset\sphere$ is hyperbolic and $E\subset X$ is finite, then
\[
 \nu=\dd\alpha_{X\setminus E}-\dd\alpha_X\ge0,
 \qquad \nu(X\setminus E)\le |E|.
\]
\end{lemma}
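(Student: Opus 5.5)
Nonnegativity of $\nu$ is domain monotonicity: the inclusion $X\setminus E\hookrightarrow X$ is holomorphic, so Schwarz--Pick gives $\rho_{X\setminus E}\ge\rho_X$ on $X\setminus E$. For the bound, induct on $|E|$. If $E=E'\sqcup\{p\}$, then
\[
\nu=\bigl(\dd\alpha_{X\setminus E}-\dd\alpha_{X\setminus E'}\bigr)+\bigl(\dd\alpha_{X\setminus E'}-\dd\alpha_X\bigr).
\]
Both terms are nonnegative, and the finite set $E$ has measure zero. So the total mass is at most the one-puncture cost for $X\setminus E'$ plus the induction bound $|E'|$. It therefore suffices to treat $E=\{p\}$ and show $\alpha_{X\setminus\{p\}}(X\setminus\{p\})-\alpha_X(X\setminus\{p\})\le1$, in the sense that the difference measure has mass at most $1$. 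Only the component $V$ of $X$ containing $p$ changes, so we may assume $X=V$ is connected.

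For a single puncture I would first handle the finite-area case. Suppose $X$ is a finitely punctured sphere $\sphere\setminus F$ with $|F|\ge3$. Gauss--Bonnet gives $\alpha_X(X)=|F|-2$ and $\alpha_{X\setminus\{p\}}=|F|-1$, so the difference is exactly $1$. The same holds for any hyperbolic Riemann surface of finite area: removing a point raises $-\chi$ by one, and with curvature $-4$ and the normalization $\frac2\pi$ this adds normalized area exactly $1$.

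The general case reduces to this by approximation via Lemma~\ref{core:kernel}. Take $D=\sphere\setminus X$, which is closed and, since $X$ is hyperbolic, contains at least three points. Choose increasing finite sets $F_j\subset D$ with $|F_1|\ge3$ and dense union. Then $\rho_{\sphere\setminus F_j}\uparrow\rho_X$ locally uniformly on $X$. Applying the lemma to $D\cup\{p\}$ with the sets $F_j\cup\{p\}$ gives $\rho_{\sphere\setminus(F_j\cup\{p\})}\uparrow\rho_{X\setminus\{p\}}$ on $X\setminus\{p\}$. On a compact $K\subset X\setminus\{p\}$, locally uniform convergence yields
\[
\int_K\bigl(\rho_{X\setminus\{p\}}^2-\rho_X^2\bigr)=\lim_j\int_K\bigl(\rho_{\sphere\setminus(F_j\cup\{p\})}^2-\rho_{\sphere\setminus F_j}^2\bigr).
\]
The right-hand integrand is nonnegative, and its integral over all of $\sphere\setminus(F_j\cup\{p\})$ is exactly $\frac\pi2$ by the finite-area case. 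So each term is at most $\frac\pi2$. Exhausting $X\setminus\{p\}$ by compacts (monotone convergence) gives $\nu(X\setminus\{p\})\le1$.

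The main obstacle is this limit step. We cannot pass to the limit in the total areas directly, since $\alpha_X(X)$ may be infinite and the two areas cannot simply be subtracted. The point is to work with the nonnegative difference of densities on compact sets, using the fact that each approximating difference has total mass exactly $1$. A second thing to check is that Lemma~\ref{core:kernel} applies with $D\cup\{p\}$ even though $p$ may be isolated there. This is fine: it requires only that $D\cup\{p\}$ be closed and that $\bigcup_j(F_j\cup\{p\})$ be dense in it, and $X\setminus\{p\}$ remains a component-wise union of complementary components.
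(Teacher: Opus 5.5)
Your proof is correct and is essentially the paper's argument: approximate $\sphere\setminus X$ by finite sets $F_j$, use Gauss--Bonnet to get the exact area cost for finitely punctured spheres, apply Lemma~\ref{core:kernel} to both $\sphere\setminus F_j$ and $\sphere\setminus(F_j\cup E)$, and pass to the limit with a Fatou-type argument (your compact exhaustion). The only difference is that the paper removes all of $E$ at once, since Gauss--Bonnet directly gives cost $|E|$ and Lemma~\ref{core:kernel} works componentwise; your induction on $|E|$ and your reduction to connected $X$ are therefore harmless but unnecessary.
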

\begin{proof}
Choose increasing finite sets $F_j\subset\sphere\setminus X$,
containing a fixed triple, with
$\overline{\bigcup_jF_j}=\sphere\setminus X$. Put
$X_j=\sphere\setminus F_j$ and
$Y_j=\sphere\setminus(F_j\cup E)$. Gauss--Bonnet gives
\[
 \int_{Y_j}(\dd\alpha_{Y_j}-\dd\alpha_{X_j})=|E|.
\]
On $X\setminus E$ both densities converge to their intrinsic limits by
Lemma~\ref{core:kernel}. Fatou's lemma for their nonnegative differences
proves the assertion, also when $X$ is disconnected.
\end{proof}

\begin{lemma}[Exterior area cost of compact holes]\label{core:holes}
Let $P_j$ be finite subsets of $\sphere$ containing a fixed triple, and
let $K\Subset\operatorname{Int}N$ be finite unions of pairwise disjoint
closed smooth Jordan disks. Suppose a fixed neighborhood of $N$ avoids
all $P_j$. For $X_j=\sphere\setminus P_j$ and $Y_j=X_j\setminus K$,
there is a constant $C<\infty$, independent of $j$, such that
\begin{equation}\label{core:holes-bound}
 \int_{X_j\setminus N}
       (\dd\alpha_{Y_j}-\dd\alpha_{X_j})\le C.
\end{equation}
\end{lemma}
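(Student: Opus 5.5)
Fix an open neighborhood $V$ of $N$ that avoids every $P_j$; shrinking it, we may take $V$ to be a finite union of pairwise disjoint smooth Jordan domains $V\supset N'\Supset N$, one around each component of $N$. On $X_j\setminus N$ we split the integrand $\dd\alpha_{Y_j}-\dd\alpha_{X_j}$ into two pieces: the annular part $(V\setminus N)$ and the far part $X_j\setminus V$. The strategy is to bound both uniformly in $j$ by comparing $Y_j$ and $X_j$ with fixed model domains that do not depend on $j$. The key tool is a cutoff (localization) estimate for the ratio $\rho_{Y_j}/\rho_{X_j}$ away from $K$.

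On the far part we use a pointwise ratio bound. For $z\in X_j\setminus V$ we claim $\rho_{Y_j}(z)\le (1+\varepsilon(z))\rho_{X_j}(z)$, where $\varepsilon(z)$ is controlled by the harmonic measure of $K$ seen from $z$ in $X_j$. First I would try a Schwarz–Pick comparison through the inclusion $Y_j\hookrightarrow X_j$. A cleaner route is the standard estimate: if $Y=X\setminus K$, then $\log(\rho_Y/\rho_X)$ is controlled near $\partial X$ because both densities blow up with the same leading behavior at the punctures $P_j$. The area of $X_j\setminus V$ is not uniformly bounded, but $\int_{X_j}\dd\alpha_{X_j}$ grows like $|P_j|-2$, so we must show that the \emph{difference} is integrable uniformly. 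To do this we use Gauss–Bonnet in the spirit of Lemma~\ref{core:puncture}. Approximate $K$ by finite sets $E_m\subset K$ with $\overline{\bigcup E_m}\supset\partial K$, increasing. Then $X_j\setminus E_m$ has total area difference exactly $|E_m|$, which is unfortunately not bounded in $m$. So the direct puncture approach fails, and we must localize.

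The localization step is the main point. I would replace $K$ by its boundary-equivalent: $\rho_{Y_j}$ on $X_j\setminus N$ depends only on $K$. Compare with the domain $Z_j=X_j\setminus \overline{N'}$ minus a small set. Concretely, apply Gauss–Bonnet to the bordered surface $Y_j\setminus N$: its hyperbolic area in the metric $\rho_{Y_j}$ equals $\pi$-normalized $-\chi$ plus boundary geodesic-curvature terms along $\partial N$. Do the same for the metric $\rho_{X_j}$ on $X_j\setminus N$. Since both surfaces have the same topology (the punctures $P_j$ lie outside $V$, each contributes equally), the Euler characteristic terms cancel. The difference of areas is then a boundary integral over $\partial N$ of the difference of geodesic curvatures, i.e. of $\partial_n\log\rho$, plus cusp contributions at $P_j$ that vanish since both metrics are cusp-like there. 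The boundary term involves only $\rho_{Y_j},\rho_{X_j}$ and their normal derivatives on the fixed compact curve $\partial N\subset V\setminus K$.

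The main obstacle is then uniform $C^1$ control of $\log\rho_{Y_j}$ and $\log\rho_{X_j}$ on $\partial N$, independent of $j$. For this, note that $V\setminus K\subset Y_j$ and $V\subset X_j$ for all $j$. Monotonicity gives upper bounds $\rho_{Y_j}\le\rho_{V\setminus K}$ and $\rho_{X_j}\le\rho_V$ on $\partial N$. Lower bounds come from $Y_j\subset\sphere\setminus(K\cup T)$ and $X_j\subset\sphere\setminus T$, where $T$ is the fixed triple. Both log-densities solve $\Delta u=4e^{2u}$ with these uniform two-sided bounds on the fixed annulus $V\setminus K$. Interior elliptic estimates then bound their gradients uniformly on $\partial N\Subset V\setminus K$. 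This yields \eqref{core:holes-bound} with $C$ depending only on $K,N,V$ and the triple. To make the Gauss–Bonnet step rigorous at the cusps, I would truncate by small horocyclic neighborhoods of each point of $P_j$ and let them shrink. Since $\rho_{Y_j}\ge\rho_{X_j}$, the integrand is nonnegative, and monotone convergence applies.
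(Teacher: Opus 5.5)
Your argument in the third and fourth paragraphs is correct and takes a genuinely different route from the paper. The harmonic-measure bound, the finite-set approximation of $K$, and the split into $V\setminus N$ and $X_j\setminus V$ are abandoned detours and should be cut.

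\emph{Your route.} Subtract the two Gauss--Bonnet identities on $X_j\setminus N$, truncated by the \emph{same} small Euclidean circles around the points of $P_j$. This is exactly Green's formula for $u_j=\log(\rho_{Y_j}/\rho_{X_j})$, which satisfies $\Delta u_j=4(\rho_{Y_j}^2-\rho_{X_j}^2)$. It gives the exact identity $\int_{X_j\setminus N}(\dd\alpha_{Y_j}-\dd\alpha_{X_j})=\frac1{2\pi}\int_{\partial N}\partial_n u_j\,\dd s$, with $n$ pointing into $N$. You bound the right side using uniform $C^1$ bounds on $\partial N$, obtained from the monotonicity sandwich together with interior estimates for $\Delta\phi=4e^{2\phi}$.

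\emph{The paper's route.} It uses the same identity in weak form. It pairs $\Delta u_j\ge0$ with a cutoff $\chi$ that equals $1$ near $\sphere\setminus N$ and vanishes near $K$, and moves both derivatives onto $\chi$. This yields only an inequality. In exchange, it needs only the $C^0$ bound $u_j\le\log(\rho_A/\rho_V)$ on the fixed annulus $A=\operatorname{Int}N\setminus K$, which is immediate from domain monotonicity, so no elliptic regularity is required. Your route needs more analysis but expresses the area cost exactly as a boundary flux.

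\emph{The punctures.} Your claim that the cusp terms vanish because ``both metrics are cusp-like'' needs to be made precise. Either of two arguments works:
\begin{enumerate}
\item Use the exact cusp form $\rho=|\psi'|/(2|\psi|\log(1/|\psi|))$ in a local coordinate $\psi$ at $p\in P_j$. It shows each log-density has flux $-2\pi+o(1)$ through small circles around $p$, so the flux of $u_j$ tends to $0$.
\item As in the paper, show $u_j\le\log\coth\varepsilon$ near $P_j$ by Schwarz--Pick. Then $u_j$ extends to a bounded subharmonic function whose Riesz measure has no atoms at $P_j$.
\end{enumerate}
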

\begin{proof}
The case $K=\varnothing$ is immediate. Otherwise set
$u_j=\log(\rho_{Y_j}/\rho_{X_j})\ge0$. The curvature equation gives
$\Delta u_j=4(\rho_{Y_j}^2-\rho_{X_j}^2)\ge0$.
Let $Q$ be the fixed triple, $V=\sphere\setminus Q$, and
$\varepsilon=d_V(K,V\setminus\operatorname{Int}N)>0$.
For $z\in X_j\setminus N$, domain monotonicity gives
$d_{X_j}(z,K)\ge d_V(z,K)\ge\varepsilon$.
A universal covering $\phi:\D\to X_j$ with $\phi(0)=z$ maps
$\{|w|<\tanh\varepsilon\}$ into $Y_j$. Schwarz--Pick gives
$u_j(z)\le\log\coth\varepsilon$
(cf.\ \cite[Proposition~3.4]{MBRG}).
Thus $u_j$ is locally bounded above at every point of $P_j$ and extends
to a nonnegative subharmonic function on $\sphere\setminus K$.

Let $A=\operatorname{Int}N\setminus K$. Since $A\subset Y_j$ and
$X_j\subset V$, domain monotonicity gives
\begin{equation}\label{core:holes-comparison}
 0\le u_j\le u_*:=\log\frac{\rho_A}{\rho_V}
 \quad\text{on }A.
\end{equation}

Fix a smooth conformal metric $g$ on the sphere and a smooth cutoff
$\chi\in C_c^\infty(\sphere\setminus K)$ with $0\le\chi\le1$ and
$\chi=1$ on a neighborhood of $\sphere\setminus N$.
Then $\operatorname{supp}\Delta_g\chi\Subset A$
avoids every $P_j$. The extended distributional Laplacian is positive,
so integration by parts on $\sphere\setminus K$ gives
\begin{align}\label{core:holes-cutoff}
 \int_{X_j\setminus N}(\dd\alpha_{Y_j}-\dd\alpha_{X_j})
 &\le\frac1{2\pi}\langle\Delta_g u_j,\chi\rangle
 =\frac1{2\pi}\int u_j\Delta_g\chi\,\dd A_g\notag\\
 &\le\frac1{2\pi}
 \sup_{\operatorname{supp}\Delta_g\chi}u_*
 \int|\Delta_g\chi|\,\dd A_g.
\end{align}
The last quantity is finite and independent of $j$.
\end{proof}

\section{The area criterion and the main theorem}\label{sec:global}

Let $f:M\to M$ be a rational map of degree at least two on $M=\sphere$,
or a transcendental entire function on $M=\C$.
The orbit of a Fatou component $U$ means the sequence of Fatou
components $U_n$ containing $f^n(U)$; thus $f(U_n)\subset U_{n+1}$.

\begin{definition}\label{def:marking}
Choose increasing finite unions $P_j$ of repelling periodic orbits with
\begin{equation}\label{core:marks}
 f(P_j)=P_j,\qquad |P_1|\ge3,\qquad
 \overline{\bigcup_jP_j}=\Jul(f),
\end{equation}
where closure is taken in $M$. Such sets exist by
\cite[Theorem~4]{Bergweiler}. Define
\begin{equation}\label{core:background}
 X_j=M\setminus P_j,\quad Y_j=X_j\setminus S(f),\quad
 \mu_j=\dd\alpha_{X_j},\quad
 \nu_j=\dd\alpha_{Y_j}-\mu_j\ge0\quad\text{on }Y_j.
\end{equation}
\end{definition}

Each $X_j$ is a finitely punctured sphere and has finite area.
Apply Lemma~\ref{core:kernel} with $D=\Jul(f)$ and $F_j=P_j$ in the
rational case, and with $D=\Jul(f)\cup\{\infty\}$ and
$F_j=P_j\cup\{\infty\}$ in the entire case. On every Fatou component $U$,
\begin{equation}\label{core:recovery}
 \rho_{X_j}|_U\uparrow\rho_U.
\end{equation}

\begin{lemma}[Area cancellation]\label{core:tail}
Let $U_n$ be pairwise distinct simply connected Fatou components with
$f(U_n)\subset U_{n+1}$ and $U_n\cap S(f)=\varnothing$.
Put $W=\bigsqcup_{n\ge0}U_n$. Then $f:W\to W\setminus U_0$ is a
conformal isomorphism and, for every $j$,
\begin{equation}\label{core:lossbound}
 \mu_j(U_0)\le\nu_j(W\setminus U_0).
\end{equation}
\end{lemma}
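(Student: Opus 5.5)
The argument has two parts. First, $f:U_n\to U_{n+1}$ is a conformal isomorphism; then I compare areas through covering invariance and domain monotonicity.

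\emph{Step 1: coverings.} For rational $f$, the map $f:U_n\to U_{n+1}$ is proper and, since $U_{n+1}$ contains no critical values, an unbranched covering. For entire $f$, classical arguments show $f:U_n\to U_{n+1}$ is surjective up to at most one omitted point. Since $U_{n+1}\cap S(f)=\varnothing$, there are no critical or asymptotic values in $U_{n+1}$, so $f$ restricted to each component of $f^{-1}(U_{n+1})$ is a covering onto $U_{n+1}$. As $U_n$ is such a component, $f:U_n\to U_{n+1}$ is a covering. Since $U_{n+1}$ is simply connected, the covering is trivial, hence a conformal isomorphism. The $U_n$ are pairwise distinct, so $f$ maps $W=\bigsqcup_{n\ge0}U_n$ bijectively onto $\bigsqcup_{n\ge1}U_n=W\setminus U_0$.

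\emph{Step 2: pullback comparison.} Since $f(P_j)=P_j$, we have $f^{-1}(X_j)\subset X_j$. The set $f^{-1}(S(f))$ may meet $W$, but $f(W)\cap S(f)=\varnothing$. Also $W\subset Y_j$, because Fatou components avoid $P_j\subset\Jul(f)$ and avoid $S(f)$ by hypothesis. Consider the holomorphic map $f:W\to f(W)\subset Y_j$. Domain monotonicity for the inclusion $W\subset X_j$ gives $\mu_j|_W\le \dd\alpha_W$. Since $f:W\to W\setminus U_0$ is a conformal isomorphism, $\dd\alpha_W=f^*\dd\alpha_{W\setminus U_0}$. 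Domain monotonicity for the inclusion $W\setminus U_0\subset Y_j$ would go the wrong way, so this route fails.

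Instead I apply Schwarz--Pick to the holomorphic map $f:X_j\setminus f^{-1}(S(f)\cup P_j)\to Y_j$. Because $f$ has no singular values outside $S(f)$, the map $f:f^{-1}(Y_j)\to Y_j$ is a covering. Hence on $f^{-1}(Y_j)\supset W$ we have
\[
 f^*\dd\alpha_{Y_j}=\dd\alpha_{f^{-1}(Y_j)}\ge \dd\alpha_{X_j}=\mu_j,
\]
the inequality coming from domain monotonicity, since $f^{-1}(Y_j)\subset X_j$. Integrate over $W$ and use injectivity of $f$ on $W$:
\[
 \mu_j(W)\le \int_{f(W)}\dd\alpha_{Y_j}=\mu_j(W\setminus U_0)+\nu_j(W\setminus U_0).
\]

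\emph{Step 3: cancellation.} We have $\mu_j(W)=\mu_j(U_0)+\mu_j(W\setminus U_0)$, and $\mu_j(W\setminus U_0)\le\mu_j(X_j)<\infty$ because $X_j$ is a finitely punctured sphere. Subtracting the finite quantity $\mu_j(W\setminus U_0)$ yields \eqref{core:lossbound}. This holds even when $\nu_j(W\setminus U_0)=\infty$, in which case the inequality is trivial.

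The main obstacle is Step 1 in the entire case. There one must justify that $f|_{U_n}$ is a covering of $U_{n+1}$, so that the image is all of $U_{n+1}$ rather than a proper subset. The key fact is that the restriction of $f$ to a component of the preimage of a domain free of singular values is a covering, which follows from the path-lifting characterization of $S(f)$. Everything else is the two monotonicity inequalities plus finiteness of $\mu_j(X_j)$.
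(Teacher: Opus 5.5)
Your proposal is correct and follows the paper's proof essentially step for step: a covering plus simple connectivity gives the isomorphism, then the pullback inequality $f^*\dd\alpha_{Y_j}=\dd\alpha_{f^{-1}(Y_j)}\ge\mu_j$ on $f^{-1}(Y_j)\supset W$ is integrated over $W$ using injectivity, and the finite quantity $\mu_j(W\setminus U_0)$ is cancelled. The one step you assert without reason is that $U_n$ is a full component of $f^{-1}(U_{n+1})$, which you need for properness in the rational case and for the covering in the entire case; the paper gets it from complete invariance, since that component is a connected subset of $\Fat(f)$ containing $U_n$.
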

\begin{proof}
The component of $f^{-1}(U_{n+1})$ containing $U_n$ lies in $\Fat(f)$
by complete invariance, so it equals $U_n$. Since
$U_{n+1}\cap S(f)=\varnothing$, the restriction $f:U_n\to U_{n+1}$
is a surjective covering \cite[Section~3.2]{Sixsmith}.
The target is simply connected, so this covering is a conformal
isomorphism. The target components are pairwise distinct, giving the
asserted isomorphism on $W$.

Since $f(P_j)=P_j$, we have $f^{-1}(Y_j)\subset X_j$.
The map $f:f^{-1}(Y_j)\to Y_j$ is an unramified covering, so
covering invariance and domain monotonicity give
\begin{equation}\label{core:cover}
 f^*(\mu_j+\nu_j)=\dd\alpha_{f^{-1}(Y_j)}\ge\mu_j
 \quad\text{on }f^{-1}(Y_j).
\end{equation}
Since $W\subset f^{-1}(Y_j)$, integrating \eqref{core:cover} and
changing variables on $W$ gives
\[
 \mu_j(W)\le\mu_j(W\setminus U_0)+\nu_j(W\setminus U_0).
\]
Since $\mu_j(W)<\infty$, cancellation proves \eqref{core:lossbound}.
\end{proof}

\begin{proposition}[Finite-area criterion]\label{core:global}
For $f:M\to M$ as above, assume that every wandering orbit of Fatou
components has a simply connected tail. Suppose a Borel set
$Z\subset X_1$ satisfies:
\begin{enumerate}
 \item all but finitely many Fatou components meeting $M\setminus Z$
 are preperiodic;
 \item $S(f)\cap Z$ is finite and
 $\sup_j\nu_j(Z\cap Y_j)\le B<\infty$.
\end{enumerate}
Then $f$ has no wandering Fatou components.
\end{proposition}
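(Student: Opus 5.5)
Argue by contradiction: suppose $U$ is a wandering Fatou component. By hypothesis its orbit has a simply connected tail, so after discarding finitely many terms all $U_n$ are simply connected, pairwise distinct (wandering), and satisfy $f(U_n)\subset U_{n+1}$. The plan is to pass to a further tail that fits Lemma~\ref{core:tail}, and then compare the resulting bound with \eqref{core:recovery}.

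First I would choose the tail. The components $U_n$ are pairwise distinct and none is preperiodic. By (1), only finitely many non-preperiodic Fatou components meet $M\setminus Z$, so only finitely many $U_n$ do. Discarding these gives $U_n\subset Z$ for all $n$. By (2), $S(f)\cap Z$ is finite, and the $U_n$ are disjoint, so only finitely many $U_n$ meet $S(f)$. After a further shift we have $U_n\cap S(f)=\varnothing$ for all $n\ge0$. Since $Z\subset X_1\subset X_j$ (the $P_j$ increase), we get $U_n\subset Y_j$ for every $j$, and hence $W=\bigsqcup_n U_n\subset Z\cap Y_j$.

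Next I would apply Lemma~\ref{core:tail} and then (2):
\[
 \mu_j(U_0)\le\nu_j(W\setminus U_0)\le\nu_j(Z\cap Y_j)\le B
\]
for every $j$. This uses $\nu_j\ge0$, which is domain monotonicity.

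Finally, by \eqref{core:recovery} the densities $\rho_{X_j}$ increase pointwise to $\rho_{U_0}$ on $U_0$. Monotone convergence then gives
\[
 \mu_j(U_0)=\int_{U_0}\tfrac2\pi\rho_{X_j}^2\uparrow\alpha_{U_0}(U_0).
\]
Because $U_0$ is a simply connected hyperbolic domain (hyperbolic since it avoids the three points of $P_1$), its intrinsic area is infinite. This contradicts the uniform bound $B$.

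The main point needing care is the bookkeeping in the tail selection. We must check that hypothesis (1) really excludes all but finitely many $U_n$; this holds because wandering components are not preperiodic and are pairwise distinct. We must also check that $W$ lies inside $Y_j$ for every $j$ at once, which is why the inclusion $Z\subset X_1$ is used. The hard analytic content is already contained in Lemma~\ref{core:tail} and \eqref{core:recovery}.
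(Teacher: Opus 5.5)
Your argument is correct and is essentially the paper's proof. You pass to a tail lying in $Z$ and avoiding the finite set $S(f)\cap Z$, apply Lemma~\ref{core:tail} with $\nu_j\ge0$ to get $\mu_j(U_0)\le B$, and reach a contradiction with \eqref{core:recovery} via monotone convergence and the infinite intrinsic area of $U_0$. One slip needs fixing: since the $P_j$ increase, $X_j\subset X_1$, not $X_1\subset X_j$. So $U_n\subset Y_j$ for every $j$ should be justified by $U_n\subset\Fat(f)$, $P_j\subset\Jul(f)$ and $U_n\cap S(f)=\varnothing$, not by $Z\subset X_1$.
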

\begin{proof}
Suppose $U$ is wandering, with component orbit $(U_n)$.
No $U_n$ is preperiodic, and the $U_n$ are pairwise distinct, so (1)
implies that only finitely many meet $M\setminus Z$.
After discarding finitely many iterates and reindexing, the $U_n$ are
simply connected, lie in $Z$, and avoid the finite set $S(f)\cap Z$.
Lemma~\ref{core:tail} gives $\mu_j(U_0)\le B$ for every $j$.
By \eqref{core:recovery} and monotone convergence, these areas tend
to the infinite intrinsic area of the simply connected domain $U_0$,
a contradiction.
\end{proof}

\begin{proof}[Proof of Theorem~\ref{core:singular-accumulation}]
Every rational wandering orbit has a simply connected tail by Baker's
covering argument \cite[Lemma~5.34, p.~90]{McMullen}.
For a transcendental entire function $f\in\mathcal B$, every Fatou
component is simply connected \cite[Proposition~3, p.~993]{EL}.

The compact set $S(f)'\Subset\Fat(f)$ meets finitely many Fatou
components. Choose finite unions of pairwise disjoint closed smooth
Jordan disks $K\Subset\operatorname{Int}N\Subset\Fat(f)$ with
$S(f)'\subset\operatorname{Int}K$. In the entire case, take images of
concentric closed disks under Riemann maps of the relevant components;
in the rational case take $K=N=\varnothing$. The set
$E_0=S(f)\setminus\operatorname{Int}K$ is finite, since otherwise
compactness would give an accumulation point of $S(f)$ outside
$\operatorname{Int}K$.

Set $Z=X_1\setminus N$. Only finitely many Fatou components meet
$M\setminus Z=P_1\cup N$, so condition (1) of
Proposition~\ref{core:global} holds. Since $S(f)\subset K\cup E_0$,
domain monotonicity gives, on
$Z\cap Y_j\subset X_j\setminus(K\cup E_0)$,
\[
 0\le\nu_j
 \le (\dd\alpha_{X_j\setminus K}-\dd\alpha_{X_j})
    +(\dd\alpha_{X_j\setminus(K\cup E_0)}
                              -\dd\alpha_{X_j\setminus K}).
\]
Since $N\Subset\Fat(f)$, a fixed neighborhood of $N$ avoids all $P_j$
and, in the entire case, infinity. Thus Lemma~\ref{core:holes} bounds
the first integral by a constant $C$ independent of $j$, using
$P_j\cup\{\infty\}$ in the entire case. Applying
Lemma~\ref{core:puncture} to $X_j\setminus K$ bounds the second by
$|E_0\cap(X_j\setminus K)|\le|E_0|$.
Hence $\nu_j(Z\cap Y_j)\le C+|E_0|$ and $S(f)\cap Z\subset E_0$,
so Proposition~\ref{core:global} applies.
\end{proof}

\subsection*{Use of artificial intelligence}
The author formulated the initial idea and guiding intuition and
established the research strategy. Working within this strategy and
under the author's direction, ChatGPT (OpenAI) identified and developed
the concrete arguments used in the proofs. The author also used ChatGPT
to assist with organizing and revising the manuscript and checking the
mathematical arguments. The author takes full responsibility for the
mathematical content, the references, and the final presentation of the
paper.

\end{document}